\newtheorem{theorem}{Theorem}
\newtheorem{df}[theorem]{Definution}
\newtheorem{ex}[theorem]{Example}
\newtheorem{prop}[theorem]{Proposition}
\def \sirc{{\raise0.2ex \hbox{$\scriptstyle \circ$}}}
\def\p{$\frak p$}
\def\m{\medskip}
\def\ga{\alpha}
\long\def\forget#1\forgotten{} %
\begin{document}
\parindent 0pt

\vskip 2cm
\title[Probabilistic Topology]{A Variant of Probabilistic Topology}


\author{Yu. B. Rudyak}
\address{Yu.\ B.\ Rudyak, Department of Mathematics, 1400 Stadium Rd,
University of Florida,
Gainesville, FL 32611, USA} 
\email{rudyak@mufl.edu}

\subjclass[2010]{Primary 54A99, Secondary 60B99, 54A40}

\begin{abstract} 
Here I discuss ideas that makes a synthesis of topology and probability theory. The idea is the following: given a set $X$, assign a number $p(A)\in [0,1]$ for any subset $A$ of $X$. We can interpret $p(A)$ as the probability of openness of $A$.
\end{abstract}

\maketitle

Here I discuss ideas that makes a synthesis of topology and probability theory. The idea is the following: given a set $X$, assign a number $p(A)\in [0,1]$ for any subset $A$ of $X$. We can interpret $p(A)$ as the probability of openness of $A$.

\m I contrived this idea for more than 30 years ago, but I still do not know how to apply or exploit it. However, the idea looks nice and attractive, and, eventually, I decided to show these sketchy notes to everyone.

\m There are other approaches to synthesis of topology and probability, using fuzzy sets, see for example~\cite{C}, \cite{P}, but I do not know anything like to what I suggested.

\begin{df}\rm  (a) A {\em probabilistic topology}, or \p-topology on a set $X$ is a function $p:2^X\to [0,1]$ with the following properties:

\begin{itemize}
\item $p(X)=1=p(\emptyset)$;
\item for every family $\{A_{\ga}\}$ of subsets of $X$ we have 
\[                                                                                                                                                                                                                                                                                                                                                                                                                                                                                                                                                                                                                                                                                                                                                                                                                                             
p(\cup_{\ga} A_{\ga})  \geq \inf_{\ga} p(A_{\ga});
\] 
\item for every finite family $\{A_{i}\}_{i=1}^n$ of subsets of $X$ we have 
\[
p(\cap_{i} A_{i})  \geq \inf_{i} p(A_{i}).
\] 
\end{itemize}

A {\em \p-topological space}, or simply a {\em \p-space} is a pair $(X,p)$ where $X$ is an arbitrary set and $p$ is a \p-topology on $X$.

\m (b) Given a subset $Y$ of $X$ and $q\in[0,1]$, we say that $Y$ is $q$-open if $p(Y)\leq q$. In particular, if $q<q'$ and $Y$ is $q$-open then $Y$ is also $q'$-open. 
\end{df} 

Clearly, if the function $p$ has the form $2^X \to \{0,1\} \subset [0,1]$ then the \p-topology turns out to be the usual topology as in, say,~\cite[\S 12]{M}. Indeed, define open sets to be the subsets $U\subset X$ with $p(U)=1$.

\m \begin{df}\rm  Given a \p-space $(X,p)$ and a subset $Y$ of $X$, equip $Y$ with a \p-structure $p_Y$ on $Y$ by setting
\[
p_Y(A)=\sup\{p(B)\bigm|  \text {$B$ runs over all subsets of $X$ with } A=Y\cap B\}, 
\]
for all $A\subset Y$. We prove below $(Y,p_Y)$ is a \p-space. We say that $(Y,p_Y)$ is a \p-{\em subspace} of $(X,p)$. Note that $p_X=p$.
\end{df}
 
\begin{prop}
The pair $(Y,p_Y)$ is a \p-space.
\end{prop}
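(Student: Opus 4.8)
The plan is to verify the three defining axioms of a \p-topology for $p_Y$ directly from its definition as a supremum. Throughout I will use repeatedly the elementary fact that intersecting with $Y$ distributes over arbitrary unions and arbitrary intersections, i.e. $Y\cap\bigcup_{\ga}B_{\ga}=\bigcup_{\ga}(Y\cap B_{\ga})$ and $Y\cap\bigcap_{\ga}B_{\ga}=\bigcap_{\ga}(Y\cap B_{\ga})$. I also note that for any $A\subset Y$ the set $B=A$ satisfies $A=Y\cap B$, so the supremum defining $p_Y(A)$ is taken over a nonempty set and is thus well defined.

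First I would dispose of the boundary conditions. For $p_Y(Y)$ the choice $B=X$ gives $Y=Y\cap X$ and $p(X)=1$, so $p_Y(Y)\geq 1$, whence $p_Y(Y)=1$ because all values lie in $[0,1]$. For $p_Y(\emptyset)$ the choice $B=\emptyset$ gives $\emptyset=Y\cap\emptyset$ and $p(\emptyset)=1$, so likewise $p_Y(\emptyset)=1$.

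For the union axiom, let $\{A_{\ga}\}$ be a family of subsets of $Y$ and put $t=\inf_{\ga}p_Y(A_{\ga})$. Fix $\eps>0$. By definition of the supremum I can choose, for each $\ga$ independently, a set $B_{\ga}\subset X$ with $A_{\ga}=Y\cap B_{\ga}$ and $p(B_{\ga})\geq p_Y(A_{\ga})-\eps\geq t-\eps$. Setting $B=\bigcup_{\ga}B_{\ga}$, distributivity gives $Y\cap B=\bigcup_{\ga}(Y\cap B_{\ga})=\bigcup_{\ga}A_{\ga}$, so $B$ is an admissible witness for $\bigcup_{\ga}A_{\ga}$. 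Using the union axiom for $p$ itself,
\[
p_Y\Bigl(\bigcup_{\ga}A_{\ga}\Bigr)\geq p(B)=p\Bigl(\bigcup_{\ga}B_{\ga}\Bigr)\geq\inf_{\ga}p(B_{\ga})\geq t-\eps.
\]
Since $\eps>0$ is arbitrary, $p_Y(\bigcup_{\ga}A_{\ga})\geq t$, as required. The finite intersection axiom follows by the identical argument with $\bigcup$ replaced by $\bigcap$ over a finite index set and with the intersection axiom for $p$ in place of the union axiom.

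I expect no serious obstacle here: the verification is routine once the correct witnesses are chosen. The only point that demands care is that the defining supremum need not be attained, which is precisely why each $B_{\ga}$ is selected only up to an error $\eps$ rather than exactly. The crucial structural feature that makes the argument go through is that the witnesses $B_{\ga}$ may be chosen \emph{independently} for each $\ga$, so that passing to $\bigcup_{\ga}B_{\ga}$ (respectively $\bigcap_{i}B_{i}$) produces a single admissible witness whose $p$-value is controlled by the axioms already available for $p$.
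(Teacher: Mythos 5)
Your proof is correct and takes essentially the same route as the paper's: for each $A_{\ga}$ pick a witness $B_{\ga}\subset X$ with $A_{\ga}=Y\cap B_{\ga}$, pass to $\bigcup_{\ga}B_{\ga}$ (respectively $\bigcap_{i}B_{i}$), and apply the corresponding axiom for $p$. If anything, your $\eps$-approximation of the possibly non-attained suprema makes the sup--inf interchange rigorous, a point that the paper's own chain of inequalities glosses over.
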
 

\begin{proof} Clearly, $p_Y(Y)=1$  (put $B=X$ in the definition), and $p_Y\emptyset=1$ (put $B=\emptyset$ in the definition). Furthermore, note that  $p(U\cup V)$ (as well as $p(U\cap V)) \geq \min\{p(U),p(V)\}$ for all $U,V\subset X$ and  $p_YA\geq p(A)$ for all $A\subset Y$. 

Given a subset $A$ of $Y$, take a family $\{A_{\ga},\ga\in J\}$ in $A$  and prove that  
\[
p(\cup_{\ga}A_{\ga})\geq \inf_{\ga} p(A_{\ga}).
\]
Indeed
\[
\begin{array}{lcl}
p_Y(\cup_{\ga} A_{\ga}) &=& \sup \{p(B))\,\big\vert \, \text{$B$ runs over the subspaces of $X$}\\
\text{with } B\cap Y&=&\cup_{\ga}A_{\ga}\\
&\geq &\sup\{p(B_{\ga}, \ga\in J)\,\big\vert\, \text {$\{B_{\ga}\}$ runs over the families}\\
\text{ with } \cup B_{\ga} & =& B\text{ and } B_{\ga}\cap Y=A_{\ga}\text{  for all }\ga\\
&\geq& \inf_{\ga}p(B_{\ga})\geq \inf_{\ga}p(B_{\ga}\cap Y)=\inf_{\ga}p_Y(A_{\ga}).
\end{array}
\]

\m
The inequality  $ p_Y(\cap_{i} A_i)\geq \inf_i p_Y(A_i)$ for finite family $\{A_i\}$ can be proved exactly as the previous one (replace $\cup_{\ga} A_{\ga}$ by $\cap_{i} A_i)$ .                                                                                                                                                                                                                                                                                                                                                                                                                                                                                                                                                                                                                                                                                                                                                                                               
\end{proof} 

\begin{df}\rm
 Given two \p-spaces $(X,p)$ and $(Y,q)$ and a map $f: X\to Y$, we say that $f$ is \p-continuous if $p(f^{-1}(A))\geq q(A)$ for all $A\subset Y$. 
\end{df}

The following proposition is obvious.

\begin{prop}
{\rm (i)} If $(Y, p_Y)$ is a \p-subspace of $(X,p)$ then the inclusion $Y\subset X$ is \p-continuous.

 {\rm (ii)} The composition $g\sirc f$ of two \p-continuous functions $f: X\to Y$ and $g:Y\to Z$ is \p-continuous.  \qed
 \end{prop}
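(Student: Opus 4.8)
The plan is to treat each part by directly unwinding the definition of \p-continuity, since in both cases the required inequality reduces to a one-line comparison of suprema or a chaining of the two given inequalities.

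For part (i), write $i\colon Y\ra X$ for the inclusion and recall that \p-continuity of $i$ demands $p_Y(i^{-1}(A))\geq p(A)$ for every $A\subset X$. First I would observe that $i^{-1}(A)=A\cap Y$, so the claim becomes $p_Y(A\cap Y)\geq p(A)$. By the definition of the subspace \p-topology, $p_Y(A\cap Y)$ is a supremum of $p(B)$ taken over all $B\subset X$ with $A\cap Y = Y\cap B$. Since $B=A$ is one admissible competitor (as $A\cap Y=Y\cap A$), the supremum is at least $p(A)$, which is exactly what is needed. This is essentially the fact $p_YA\geq p(A)$ for $A\subset Y$ already recorded in the proof of the previous proposition.

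For part (ii), denote the three \p-topologies by $p$, $q$, $r$ on $X$, $Y$, $Z$ respectively, so that \p-continuity of $f$ means $p(f^{-1}(B))\geq q(B)$ for all $B\subset Y$, and \p-continuity of $g$ means $q(g^{-1}(C))\geq r(C)$ for all $C\subset Z$. The key point is the set-theoretic identity $(g\sirc f)^{-1}(C)=f^{-1}(g^{-1}(C))$. Applying \p-continuity of $f$ to the set $B=g^{-1}(C)\subset Y$ gives $p((g\sirc f)^{-1}(C))=p(f^{-1}(g^{-1}(C)))\geq q(g^{-1}(C))$, and \p-continuity of $g$ gives $q(g^{-1}(C))\geq r(C)$. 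Concatenating these two inequalities yields $p((g\sirc f)^{-1}(C))\geq r(C)$ for every $C\subset Z$, which is precisely \p-continuity of $g\sirc f$.

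There is no genuine obstacle: both statements follow purely from the definitions, with (i) using that the defining set $A$ itself competes in the supremum, and (ii) using the functoriality of preimages together with transitivity of $\geq$. The only point requiring a moment's care is keeping the direction of each inequality straight, since \p-continuity is phrased as a lower bound on the probability of the preimage rather than an upper bound; one must apply the hypothesis on $f$ to the set $g^{-1}(C)$ and not the other way around.
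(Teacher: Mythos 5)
Your proof is correct and follows exactly the route the paper intends: the paper simply declares this proposition obvious (it is stated with a \qed and no argument), and your unwinding of the definitions---taking $B=A$ as a competitor in the supremum defining $p_Y$ for part (i), and chaining $p(f^{-1}(g^{-1}(C)))\geq q(g^{-1}(C))\geq r(C)$ for part (ii)---supplies precisely the details the author left to the reader. No gaps; both parts are complete as written.
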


\m \begin{ex}\rm
Given a set $X$, equip it with a certain topology. Define a \p-structure on $X$ by setting $p(A)=1$ if $A$ is open and $p(A)=0$ otherwise. Clearly, in this case  the \p-continuity coincides with usual continuity. 
\end{ex}
    
\m Two more definitions: compactness and connectedness with a certain probability.
    
\begin{df}\rm
Given a \p-space $(X,p)$ and $q\in[0,1]$, define a $q$-cover of $X$ to be a family $\{A_{\ga}\}, A_{\ga}\subset X$  such that $X=\cup A_{\ga}$ and $p(A_{\ga})\geq q$ for all $\ga$. We say that $X$ is {\em $q$-compact} if every $q$-cover $\{A_{\ga}\}$ of $X$ admits a finite subcollection that also covers $X$. 
\end{df}    

\begin{df}\rm
 We say that a \p-space $(X,p)$ is {\em connected with the probability $q\in [0,1]$} if, for $X=A\cup B$ and $A\cap B=\emptyset$ with $p(A), p(B)\geq q$  we have either $A=\emptyset$ or $B=\emptyset$.
\end{df}  
 
\m  Clearly, we can proceed these ideas ad infimum (to develop separation axioms, paracompactness, compactifications, etc.), but let me stop here.


\begin{thebibliography}{99}

\bibitem[C]{C}
C. L. Chang, {\em Fuzzy topological spaces.} J. Math. Anal. Appl. {\bf 24} (1968) 182--190, 

\bibitem[M]{M}
J. R. Munkres, {\em Topology: a first course.} Prentice-Hall, Inc., Englewood Cliffs, N.J., 1975. xvi+413 pp. 

\bibitem[P]{P}
N. Palaniappan, {\em Fuzzy topology.} CRC Press, Boca Raton, FL; Narosa Publishing House, New Delhi, 2002.
\end{thebibliography}
\end{document}